\newcommand{\tr}[2]{\textrm{tr}_{#1} \, {#2}}
\newcommand{\ve}{\varepsilon}
\newcommand{\omegas}{\omega_{\textrm{sing}}}
\newcommand{\oloc}{\omega_{\textrm{loc}}}
\newcommand{\ofs}{\omega_{\textrm{FS}}}
\newcommand{\dbar}{\overline{\partial}}
\newcommand{\ddt}[1]{\frac{\partial #1}{\partial t}}
\newcommand{\ov}[1]{\overline{#1}}
\newcommand{\op}{\omega_{\textrm{prod}}}
\newcommand{\ddbar}{\frac{\sqrt{-1}}{2\pi} \partial\dbar}
\begin{document}
\newcounter{remark}
\newcounter{theor}
\setcounter{remark}{0}
\setcounter{theor}{1}
\newtheorem{claim}{Claim}
\newtheorem{theorem}{Theorem}[section]
\newtheorem{proposition}{Proposition}[section]
\newtheorem{conjecture}{Conjecture}[section]
\newtheorem{question}{Question}[section]
\newtheorem{lemma}{Lemma}[section]
\newtheorem{definition}{Definition}[theor]
\newtheorem{corollary}{Corollary}[section]
\newenvironment{proof}[1][Proof]{\begin{trivlist}
\item[\hskip \labelsep {\itshape #1}]}{\hfill$\square$\medskip\end{trivlist}}
\newenvironment{remark}[1][Remark]{\addtocounter{remark}{1} \begin{trivlist}
\item[\hskip
\labelsep {\bfseries #1  \thesection.\theremark}]}{\end{trivlist}}
\setlength{\arraycolsep}{2pt}

\centerline{ \bf \large THE K\"AHLER-RICCI FLOW}

\smallskip

\centerline{\bf \large ON PROJECTIVE BUNDLES\footnote{The first-named author was supported in part by an NSF CAREER grant 
  DMS-08-47524 and a Sloan Fellowship;  the second-named author  by the NSF grant DMS-09-04223; the third-named author  by the NSF grants DMS-08-48193, DMS-11-05373 and a Sloan Fellowship.} }
  \bigskip

\centerline{ Jian Song$^*$, G\'abor Sz\'ekelyhidi$^{**}$ and Ben Weinkove$^{***}$}

\bigskip

\abstract{We study the behaviour of the K\"ahler-Ricci flow on
projective bundles. We show that if the initial metric is in a suitable
K\"ahler class, then the fibers collapse in finite time and the metrics
converge subsequentially in the Gromov-Hausdorff sense to a metric on the base.}

\bigskip

\section{Introduction}

Let $X$ be a projective bundle over a smooth projective variety $B$.
This means that $X=\mathbb{P}(E)$, where $\pi: E\to B$ is a holomorphic vector bundle of rank $r$, say. We study the behavior of the solution $\omega=\omega(t)$ of the
K\"ahler-Ricci flow on $X$ starting at a K\"ahler metric $\omega_0$:
\begin{equation} \label{krf}
\ddt{} \omega = - \textrm{Ric}(\omega), \qquad \omega|_{t=0} = \omega_0.
\end{equation}

The solution $\omega(t)$ develops a singularity after a finite time.  Indeed from \cite{TZ}
a maximal smooth solution to (\ref{krf}) exists on $[0,T)$ where $T>0$ is given by
\begin{equation}
T = \sup \{ t >0 \ | \ [\omega_0] + t c_1(K_X) >0 \}.
\end{equation}
$T$ is finite since $F \cdot c_1(-K_X)^{r-1} >0$ for every fiber $F$.
For $t \in [0,T)$, $\omega(t)$ lies in the class $[\omega_0] + t c_1(K_X)$.
We will assume that 
the limiting K\"ahler class $[\omega_0] + T c_1(K_X)$ satisfies
\begin{equation} \label{condition}
[\omega_0] + T c_1(K_X) = [\pi^* \omega_{B}],
\end{equation}
for some K\"ahler metric $\omega_{B}$ on $B$.

Our main result shows that a sequence of metrics along the flow
converges subsequentially to a metric on $B$ in the Gromov-Hausdorff sense as $t\to T$. 
\begin{theorem}\label{thm:main}
	There exists a sequence of times $t_i\to T$ and a distance
	function $d_B$ on $B$ (which is uniformly equivalent to the
	distance induced by $\omega_B$), such that $(X,\omega(t_i))$
	converges to $(B,d_B)$ in the Gromov-Hausdorff sense. 
\end{theorem}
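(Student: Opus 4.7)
The plan is to reduce the K\"ahler-Ricci flow to a parabolic complex Monge-Amp\`ere equation for a scalar potential and then establish the estimates needed for Gromov-Hausdorff convergence. Fix a smooth family of reference $(1,1)$-forms $\hat{\omega}_t = \tfrac{T-t}{T}\omega_0 + \tfrac{t}{T}\pi^*\omega_B$ in the class $[\omega(t)]$, and choose a smooth volume form $\Omega$ on $X$ whose Chern-Ricci form represents $-c_1(K_X)$. Writing $\omega(t) = \hat{\omega}_t + \ddbar\varphi(t)$, the flow is equivalent to a parabolic Monge-Amp\`ere equation of the form $\partial_t\varphi = \log(\omega(t)^n/\Omega) - F$ for a smooth function $F$ on $X$, on which maximum-principle techniques can be directly applied.

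I would then aim to establish three core estimates on $[0,T)$. First, a uniform $C^0$-bound $|\varphi|\leq C$ via standard maximum-principle comparisons with the reference family (together with barriers using the extremal nature of $T$). Second, a parabolic Schwarz lemma of Yau type applied to the holomorphic submersion $\pi:(X,\omega(t))\to(B,\omega_B)$ yielding a trace estimate $\pi^*\omega_B \leq C\omega(t)$; this will provide a lower bound on distances between fibers of the form $C^{-1} d_{\omega_B}$. Third, since $[\omega(t)|_F]=\tfrac{T-t}{T}[\omega_0|_F]$ on each fiber $F\cong\mathbb{P}^{r-1}$, one obtains $\omega(t)^n \leq C(T-t)^{r-1}\Omega$, and combining this with the trace estimate and a fiberwise Monge-Amp\`ere analysis (exploiting the Fano structure of the fiber) should force the intrinsic diameter of each fiber to tend to $0$ as $t\to T$.

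To promote these estimates to Gromov-Hausdorff convergence I would need two further pieces of information: a uniform upper bound on $\mathrm{diam}(X,\omega(t))$, and an upper bound on distances between fibers by $C\,d_{\omega_B}$. The latter follows from the trace estimate together with horizontal lifting of base geodesics, using that $\mathrm{tr}_{\omega(t)}(\pi^*\omega_B)$ is uniformly controlled. The former I would extract from Perelman-type scalar-curvature and non-collapsing bounds along the K\"ahler-Ricci flow, combined with the distance comparison and fiber collapse just described. Given these, Gromov's compactness theorem produces a subsequence $t_i\to T$ such that $(X,\omega(t_i))\to(Y,d_Y)$ in GH sense, and the combined estimates imply that $\pi$ descends to a bi-Lipschitz homeomorphism $(Y,d_Y)\to(B,d_B)$ with $d_B$ uniformly equivalent to the distance induced by $\omega_B$, which is the claim.

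The main obstacle, I expect, will be establishing the uniform diameter bound and the fiber-collapse estimate simultaneously in this degenerating regime. Perelman's techniques were developed for the normalized K\"ahler-Ricci flow on Fano manifolds, and their adaptation to the present collapsing, non-Fano setting requires additional care; moreover, the fiber collapse must be controlled uniformly in \emph{diameter} and not merely in volume, which typically requires a fiberwise blow-up and bootstrap argument leveraging the projective-bundle structure (for instance, a comparison with a fiberwise Fubini-Study metric) to convert the analytic estimates into genuine Gromov-Hausdorff control.
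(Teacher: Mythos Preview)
Your outline correctly identifies the parabolic Monge--Amp\`ere reduction, the $C^0$ bound on $\varphi$, and the parabolic Schwarz lemma $\pi^*\omega_B \le C\,\omega(t)$. However, the proposal has a genuine gap: you never obtain the \emph{upper} bound $\omega(t)\le C\,\omega_0$ (equivalently, $\tr{\omega_0}{\omega(t)}\le C$), and several of your subsequent steps implicitly rely on it.

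Concretely, your claim that the upper distance bound between fibers ``follows from the trace estimate together with horizontal lifting of base geodesics, using that $\tr{\omega(t)}(\pi^*\omega_B)$ is uniformly controlled'' has the inequality backwards. The Schwarz lemma gives $\tr{\omega(t)}(\pi^*\omega_B)\le C$, i.e.\ $\omega(t)\ge c\,\pi^*\omega_B$; this bounds lengths of horizontal lifts \emph{from below}, not above. To bound $d_{\omega(t)}$ from above via horizontal lifting you need $\omega(t)\le C\,\omega_0$ on horizontal vectors, which is precisely the missing estimate. For the same reason, your appeal to Perelman-type diameter bounds and Gromov precompactness is unjustified in this collapsing, non-Fano regime (as you yourself note), and the fiber-diameter collapse cannot be extracted from the volume decay $\omega(t)^n\le C(T-t)^{r-1}\Omega$ alone without a pointwise metric upper bound.

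The paper's approach fills exactly this gap: it proves $\tr{\omega_0}{\omega(t)}\le C$ by a maximum-principle argument tailored to the projective-bundle structure. One twists $E$ so that $E^*$ is globally generated, uses global sections to produce a local biholomorphism $\pi^{-1}(U)\cong U\times\mathbb{P}^{r-1}$, and pulls back the product metric $\omegas$; since the Fubini--Study factor has nonnegative bisectional curvature, Cao's evolution inequality for $\log\tr{\omegas}{\omega}$ has a right-hand side controlled by $\tr{\omega}{(\pi^*\omega_B)}$, which the Schwarz lemma already bounds. A weight $|f|_h^3$ (with $f$ the determinant of the chosen sections) forces the maximum into $\pi^{-1}(U)$. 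Once $\omega(t)\le C\,\omega_0$ is in hand, the fiber-diameter decay $\mathrm{diam}_{g(t)}F_b\le C(T-t)^{1/3}$ follows from an elementary slicing argument on a $\mathbb{P}^1$ in the fiber, the global diameter bound is immediate, and Gromov--Hausdorff convergence is obtained by Arzel\`a--Ascoli on the distance functions $d_t$ (equicontinuous by the metric upper bound) rather than by Gromov compactness.
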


This implies that the fibers of $X$ collapse.
In order to prove this theorem we establish (Lemmas \ref{lem:bdd} and \ref{lem:diam}) the following estimates for a uniform constant $C$ and for all $t \in [0,T)$:

\begin{enumerate}
\item[(i)] $\omega(t) \le C \omega_0$, and
\item[(ii)] $\textrm{diam}_{\omega(t)} F \le C(T-t)^{1/3}$, for every fiber $F$.
\end{enumerate}

That is, the metrics $\omega(t)$ are uniformly bounded from above along the flow and the diameters of the fibers tend to zero as $t \rightarrow T$.

To illustrate how Theorem \ref{thm:main} ties in with the existing literature on the K\"ahler-Ricci flow we discuss the example of $X = \mathbb{P}^2$ blown up at one point,  which is a $\mathbb{P}^1$ bundle over $\mathbb{P}^1$.  As $t \rightarrow T$ the behavior of $\omega(t)$ in the sense of Gromov-Hausdorff depends on the point at which the K\"ahler classes $[\omega(t)]$ hit the boundary of the K\"ahler cone.
Write $\alpha = [\omega_0] + T c_1(K_X)$.  Then one of the following holds:
\pagebreak[3]
\begin{enumerate}
\item[(1)] $\alpha=0$.
\item[(2)] $\alpha$ is the pull-back of a K\"ahler class from the base $\mathbb{P}^1$ (the setting of this paper).  
\item[(3)] $\alpha$ is the pull-back of a K\"ahler class from $\mathbb{P}^2$ via the blow-down map $p : X \rightarrow \mathbb{P}^2$.
\end{enumerate}
In each case we have a map $f: X \rightarrow M$ to a manifold $M$ (of dimension 0, 1 and 2 respectively) and $\alpha = f^* \beta$ for $\beta$ a K\"ahler class on $M$.    Feldman-Ilmanen-Knopf \cite{FIK} made a number of conjectures about the behavior of (\ref{krf}) which they established for self-similar solutions.  In particular they conjectured that in each of these three cases the flow should converge in the Gromov-Hausdorff sense to a metric on $M$.  We now briefly describe some progress on these conjectures.

In case (1) it  is an immediate consequence of a result of Perelman \cite{P} (see \cite{SeT}) that $(X, \omega(t))$ converges in the Gromov-Hausdorff sense to a point.  When the initial metric is invariant under a $U(2)$ symmetry it was shown in \cite{SW1} that:  in case (2), $(X, \omega(t))$ converges in the Gromov-Hausdorff sense to the base $\mathbb{P}^1$ with the Fubini-Study metric;  and in case (3), $(X, \omega(t))$ converges in the Gromov-Hausdorff sense to $(\mathbb{P}^2, d)$ where $d$ is a metric on $\mathbb{P}^2$ inducing the usual topology.  In \cite{SW2} the same behavior for (3) was established without assuming symmetry of the initial data.  Thus the case of (2) without symmetry assumption was still open.

Although we have limited our discussion here to the case of $\mathbb{P}^2$ blown up at one point, the results of \cite{SW1}  and especially \cite{SW2} apply to a much larger class of manifolds.  Moreover,  these results fit into a general program of understanding the K\"ahler-Ricci flow on algebraic varieties and the analytic minimal model program \cite{Ts, SoT3, T} (see also \cite{EGZ, SoT1, SoT2, TZ, Zha1, Zha2}).

Our Theorem \ref{thm:main} is concerned with the setting of (2)  for a general projective bundle over an algebraic variety $B$.  Even in the case of a product $B \times \mathbb{P}^{r-1}$ with arbitrary initial metric $\omega_0$, the result of Theorem \ref{thm:main} is new.  

 Of course, the \cite{FIK} conjectures predict that the convergence should occur without taking subsequences.  A difficulty in  (2) compared to the setting  (3) is that the metric $\omega(t)$ becomes singular at \emph{every} point of the manifold $X$.  

A further interesting and difficult question is to analyze the singularity at time $T$ by a rescaling procedure (see the discussion in \cite{FIK}).  In case (1) above, rescaling so that $X$ has fixed volume, one obtains a compact K\"ahler-Ricci soliton \cite{WZ, Zhu, TZhu}. For more general manifolds with $c_1(X)>0$ this is related to a question of Yau \cite{Y2} regarding stability in the sense of geometric invariant theory (see for example \cite{CW, D, T1, PS, PSSW, Sz, T1, To}).  In case (2), rescaling so that the fibers have fixed volume, one would expect to obtain a product $\mathbb{P}^1 \times \mathbb{C}$ (see the recent preprint \cite{Fo} in the case of $U(2)$ symmetry).  In case (3) the appropriate rescaling should yield the shrinking soliton constructed in \cite{FIK}. 

In Section \ref{sec:estimates}, we establish the key estimates (i) and (ii)  mentioned above.  
 In Section~\ref{sec:mainproof} we 
prove Theorem~\ref{thm:main}.

\section{The main estimates}\label{sec:estimates}

In this section we establish the main estimates.  Assume $X$ has complex dimension $n$ and  $X = \mathbb{P}(E)$ where $E$ is a rank $r$ holomorphic vector bundle over $B$.  We will often write $g$ for the K\"ahler metric with K\"ahler form $\omega$.  We use $C$, $C'$ or $C_i$ to  denote a uniform constant, which may differ from line to line.

First, we have a lower bound for $\omega(t)$ 
from the parabolic Schwarz Lemma, as in  \cite{SoT1}.  

\begin{lemma} \label{schwarz}
There exists a uniform constant $c>0$ such that
\begin{equation}
\omega \ge c \pi^* \omega_{B}.
\end{equation}
\end{lemma}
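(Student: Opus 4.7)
The plan is to apply the parabolic Schwarz lemma of Yau, in the form adapted by Song--Tian \cite{SoT1} to the setting of a degenerate pulled-back target form. Set $u = \textrm{tr}_\omega(\pi^*\omega_B) = g^{i\bar j}(\pi^*\omega_B)_{i\bar j}$, which is strictly positive everywhere since $\pi$ is a submersion. A direct computation, using the K\"ahler-Ricci flow equation $\partial_t g_{i\bar j} = -R_{i\bar j}$ together with Yau's Schwarz-type estimate applied to the holomorphic map $\pi$, yields the differential inequality
\begin{equation*}
(\partial_t - \Delta_\omega) \log u \,\leq\, C_0\, u,
\end{equation*}
where $C_0$ depends only on an upper bound for the bisectional curvature of $(B, \omega_B)$.

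To convert this into an $L^\infty$ bound on $u$, I would apply the parabolic maximum principle to a suitable auxiliary function built from $\log u$, the K\"ahler potential $\varphi$, and its time derivative $\dot\varphi$. Writing $\omega(t) = \hat\omega_t + \ddbar \varphi$ with reference family $\hat\omega_t = (1 - t/T)\omega_0 + (t/T)\pi^*\omega_B$, and normalizing the volume form $\Omega$ so that $\dot\varphi = \log(\omega^n/\Omega)$, one has
\begin{equation*}
(\partial_t - \Delta_\omega)\varphi = \dot\varphi - n + \textrm{tr}_\omega\hat\omega_t, \qquad (\partial_t - \Delta_\omega)\dot\varphi = \textrm{tr}_\omega \chi,
\end{equation*}
where $\chi = (1/T)(\pi^*\omega_B - \omega_0) \in c_1(K_X)$. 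Since $\textrm{tr}_\omega\hat\omega_t \geq (t/T)u$ and $T\,\textrm{tr}_\omega\chi = u - \textrm{tr}_\omega\omega_0$, each right-hand side produces a positive multiple of $u$ that can be used to absorb the $C_0 u$ term in the Schwarz inequality above. Forming a barrier of the form $Q = \log u - A\varphi + B\dot\varphi$ with appropriately chosen constants $A, B$, and invoking the standard uniform $C^0$ bound on $\varphi$ in this setting (see \cite{SoT1}), the parabolic maximum principle then yields $u \leq C$ uniformly on $[0, T)$.

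Finally, the pointwise inequality $\omega \geq c\,\pi^*\omega_B$ follows from the trace bound $u \leq C$ by simultaneous diagonalization: at each point, choose a basis in which both Hermitian forms are diagonal, with $\omega = \textrm{diag}(\lambda_1, \ldots, \lambda_n)$ (strictly positive) and $\pi^*\omega_B = \textrm{diag}(\mu_1, \ldots, \mu_n)$ (nonnegative); then $\sum_i \mu_i/\lambda_i = u \leq C$ forces $\lambda_i \geq \mu_i/C$ for each $i$, which is the stated inequality with $c = 1/C$.

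The main obstacle is the maximum principle step: because $\dot\varphi = \log(\omega^n/\Omega)$ is unbounded from below as the fibers of $\pi$ collapse, the coefficients of the barrier must be balanced so that the positive $u$-contributions coming from $\textrm{tr}_\omega\hat\omega_t$ and $\textrm{tr}_\omega\chi$ simultaneously dominate the Schwarz term $C_0 u$ and force the maximum of $Q$ to occur away from the region where $\dot\varphi$ degenerates. This delicate balancing, rather than the Schwarz computation itself, is the technical heart of the parabolic Schwarz lemma of \cite{SoT1}.
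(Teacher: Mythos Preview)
Your approach is essentially the paper's: derive the parabolic Schwarz inequality $(\partial_t-\Delta)\log u\le C_0u$ for $u=\tr{\omega}{(\pi^*\omega_B)}$, then close with a maximum principle on a barrier built from the potential. The paper, however, uses the simpler quantity $Q=\log u - A\varphi$ with no $\dot\varphi$ term at all. The only inputs are the Schwarz inequality, the observation $\Delta\varphi = n - \tr{\omega}{\hat\omega_t}\le n - c'\,u$ (coming from $\hat\omega_t\ge c'\pi^*\omega_B$ uniformly in $t$), and the uniform $C^0$ bound on $\varphi$.

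Your last paragraph overstates the difficulty, and the proposed remedy does not do what you claim. You worry that $\dot\varphi\to-\infty$ forces a ``delicate balancing,'' and you add $B\dot\varphi$ to the barrier to handle it. But $(\partial_t-\Delta)(B\dot\varphi)=B\,\tr{\omega}{\chi}$ contains no $\dot\varphi$ term whatsoever, so it cannot cancel the $-A\dot\varphi$ that appears in $(\partial_t-\Delta)(-A\varphi)$; your barrier $\log u - A\varphi + B\dot\varphi$ has exactly the same $\dot\varphi$-issue as $\log u - A\varphi$. If one really wants a barrier whose heat operator is free of $\dot\varphi$, the correct combination in this finite-time setting is $(T-t)\dot\varphi+\varphi$, since $(\partial_t-\Delta)\big[(T-t)\dot\varphi+\varphi\big]=u-n$; with $Q=\log u - A\big[(T-t)\dot\varphi+\varphi\big]$ the maximum principle closes immediately. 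Either way, the ``technical heart'' you describe is not present in the paper's argument, which is a brief sketch deferring to \cite{SoT1, SW2, TZ} for the routine details.
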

\begin{proof}  This estimate is well-known to hold.   Indeed the argument is almost identical to the proof of Lemma 2.2 in \cite{SW2} (see also \cite{TZ}).  We provide a brief sketch for the reader's convenience.  First, we reformulate (\ref{krf}) as a parabolic complex Monge-Amp\`ere equation.   Define a family of reference metrics $\hat{\omega}_t \in [\omega(t)]$ by
\begin{equation} \label{reference}
\hat{\omega}_t = \frac{1}{T} ((T-t) \omega_0 + t \pi^* \omega_{B}),
\end{equation}
and let $\Omega$ be the unique volume form on $X$ satisfying
\begin{equation}
\ddbar \log \Omega = \ddt{} \hat{\omega}_t = \frac{1}{T} ( \pi^* \omega_{B} - \omega_0) \in c_1(K_X), \quad \int_X \Omega =1.
\end{equation}
 If $\varphi=\varphi(t)$ solves the parabolic complex Monge-Amp\`ere equation 
\begin{equation} \label{pcma}
\ddt{\varphi} = \log \frac{(\hat{\omega}_t + \ddbar \varphi)^n}{\Omega}, \qquad \hat{\omega}_t + \ddbar \varphi >0,  \qquad \varphi|_{t=0} = 0,
\end{equation}
then $\omega = \hat{\omega}_t + \ddbar \varphi$ solves (\ref{krf}).  Conversely, given the solution $\omega=\omega(t)$ of (\ref{krf}) on $[0,T)$, we can extract the unique solution $\varphi=\varphi(t)$ of (\ref{pcma}) for $t \in [0,T)$.

Since $c_0 (T-t)^n \Omega \le \hat{\omega}_t^n \le C_0 \Omega$ for uniform positive constants $c_0$  and  $C_0$, an elementary maximum principle argument shows that $\varphi$ is uniformly bounded (see \cite{TZ} or Lemma 2.1 of \cite{SW2}).  We then compute using the parabolic Schwarz lemma computation of \cite{SoT1} that 
\begin{equation}
\left( \ddt{} - \Delta \right) \log  \tr{\omega}{(\pi^* \omega_B)}  \le C \tr{\omega}{(\pi^* \omega_B)},
\end{equation}
for a uniform constant $C$.  On the other hand, for a uniform constant $c'>0$,
\begin{equation}
 \Delta \varphi =  \tr{\omega}{(\omega - \hat{\omega}_t)} \le n - c' \tr{\omega}{(\pi^* \omega_B)}.
\end{equation}
The result follows by applying the maximum principle to the quantity
\begin{equation}
Q = \log \tr{\omega}{(\pi^* \omega_B}) - A \varphi 
\end{equation}
for $A$  sufficiently large  and using the fact that $\varphi$ is uniformly bounded.\end{proof}

We will make use of this in the  key estimate:

\begin{lemma}\label{lem:bdd}
There exists a constant $C>0$ such that for $t$ in $[0,T)$,
\begin{equation}
\emph{tr}_{\omega_0}\, \omega(t) \le C.
\end{equation}
\end{lemma}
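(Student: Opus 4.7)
The plan is to run a parabolic maximum principle on a test function built from $\log \tr{\omega_0}{\omega}$ and the Monge--Amp\`ere potential $\varphi$ from the proof of Lemma~\ref{schwarz}. I will continue with $\omega(t)=\hat{\omega}_t+\ddbar\varphi$, where $\hat{\omega}_t$ is the reference metric (\ref{reference}), and with $|\varphi|\le C$ uniform. The starting point is the standard Aubin--Yau/parabolic Schwarz computation applied to the identity map $(X,\omega)\to(X,\omega_0)$, yielding
$$
\left(\frac{\partial}{\partial t}-\Delta\right)\log \tr{\omega_0}{\omega} \;\le\; C_0\,\tr{\omega}{\omega_0},
$$
where $C_0$ is an upper bound for the holomorphic bisectional curvature of $\omega_0$.

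A first attempt is the test function $Q=\log \tr{\omega_0}{\omega}-A\varphi$: using $\Delta\varphi=n-\tr{\omega}{\hat{\omega}_t}$ together with $\hat{\omega}_t\ge\tfrac{T-t}{T}\omega_0$ gives
$$
\left(\frac{\partial}{\partial t}-\Delta\right)Q \;\le\; \Big(C_0-\tfrac{A(T-t)}{T}\Big)\tr{\omega}{\omega_0}+An-A\ddt{\varphi}.
$$
The main obstacle is now visible: no fixed $A$ makes the coefficient of $\tr{\omega}{\omega_0}$ negative uniformly on $[0,T)$, precisely because the reference $\hat{\omega}_t$ degenerates on fiber directions as $t\to T$. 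To overcome this I would exploit the projective bundle structure. Fix a Hermitian metric $h$ on $E$, let $\omega_{FS,h}$ be the associated Fubini--Study representative of $\xi=c_1(\mathcal{O}_{\mathbb{P}(E)}(1))$, and use $K_X=-r\xi+\pi^*(K_B+\det E^*)$ together with (\ref{condition}) to write $\omega_0=Tr\,\omega_{FS,h}+\pi^*\chi+\ddbar\rho$ for a closed $(1,1)$-form $\chi$ on $B$ and smooth function $\rho$. The modified reference
$$
\tilde{\omega}_t:=(T-t)r\,\omega_{FS,h}+\pi^*\big(\chi+\tfrac{t}{T}(\omega_B-\chi)\big)
$$
represents $[\omega(t)]$ up to $\ddbar\rho$, is K\"ahler for $t<T$, and retains uniform fiber positivity through the $\omega_{FS,h}$ term.

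Rerunning the max-principle argument on $\tilde Q=\log \tr{\omega_0}{\omega}-A\tilde\varphi$, where $\omega=\tilde{\omega}_t+\ddbar\tilde\varphi$ and $\Delta\tilde\varphi=n-\tr{\omega}{\tilde{\omega}_t}$, the comparison $\tilde{\omega}_t\ge c\,\omega_{FS,h}+c\,\pi^*\omega_B\ge c'\,\omega_0$ (combining fiber positivity with the base control from Lemma~\ref{schwarz}) should produce a genuinely negative coefficient of $\tr{\omega}{\omega_0}$ for a fixed $A$, closing the estimate. The secondary technical difficulty I anticipate is controlling the $-A\ddt{\tilde\varphi}$ term, which need not be bounded below; I would handle this either by a separate maximum-principle bound on $\ddt{\varphi}$ via its evolution $(\partial_t-\Delta)\ddt{\varphi}=\tfrac{1}{T}\tr{\omega}{(\omega_0-\pi^*\omega_B)}$, together with the Schwarz bound $\tr{\omega}{\pi^*\omega_B}\le C$, or by absorbing a multiple of $\ddt{\varphi}$ into $\tilde Q$ so that the offending term becomes a useful positive multiple of $\tr{\omega}{\omega_0}$. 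Together these ingredients yield $\tr{\omega_0}{\omega}\le C$.
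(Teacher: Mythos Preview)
Your approach has a genuine gap, and it is precisely the same obstacle you correctly identified in your ``first attempt''. The modified reference $\tilde{\omega}_t=(T-t)r\,\omega_{FS,h}+\pi^*(\cdots)$ still carries the coefficient $(T-t)$ in the fiber direction, so the claimed uniform bound $\tilde{\omega}_t\ge c\,\omega_{FS,h}+c\,\pi^*\omega_B$ is false as $t\to T$. This is unavoidable: any representative of $[\omega(t)]$ restricts on each fiber to a form in the class $(T-t)r\,\xi|_F$, hence must degenerate on fibers. Consequently $\tr{\omega}{\tilde{\omega}_t}$ can never dominate $\tr{\omega}{\omega_0}$ with a fixed constant, and the $-A\tilde\varphi$ correction buys nothing new. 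Your fallback for the $\ddt{\tilde\varphi}$ term is also circular: the evolution $(\partial_t-\Delta)\ddt{\varphi}=\tfrac1T\tr{\omega}{(\pi^*\omega_B-\omega_0)}$ gives an \emph{upper} bound on $\ddt{\varphi}$ via Lemma~\ref{schwarz}, but a \emph{lower} bound would need $\tr{\omega}{\omega_0}\le C$, which is the estimate you are trying to prove.

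The paper's proof avoids the potential $\varphi$ entirely. The key idea is not to compare $\omega$ with $\omega_0$ but with a \emph{local product metric} $\omegas=\Phi^*((\textrm{pr}_1)^*\omega_B+(\textrm{pr}_2)^*\ofs)$, pulled back via a holomorphic trivialization $\Phi:\pi^{-1}(U)\to U\times\mathbb{P}^{r-1}$ constructed from global sections of $E^*$. Because the fiber factor is the genuine Fubini--Study metric, the bisectional curvature of $\omegas$ is nonnegative in fiber directions, and the Cao--Yau evolution inequality yields
\[
\left(\ddt{}-\Delta\right)\log\tr{\omegas}{\omega}\ \le\ C\,\tr{\omega}{\pi^*\omega_B}\ \le\ C',
\]
the last step by Lemma~\ref{schwarz}. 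No $\tr{\omega}{\omega_0}$ term appears on the right, so no $-A\varphi$ is needed. The metric $\omegas$ is singular along $\{f=0\}$ (where $f=s_1\wedge\cdots\wedge s_r$), and one shows $\tr{\omegas}{\omega_0}\le C|f|_h^{-2}$; adding $3\log|f|_h$ forces the maximum of $H=\log(|f|_h^3\,\tr{\omegas}{\omega})$ into $\pi^{-1}(U)$, and the extra $\Delta\log|f|_h^3$ is again controlled by $\tr{\omega}{\pi^*\omega_B}$. A covering argument over $B$ finishes. The point you are missing is that the nonnegative curvature of the fiber metric, not any comparison of reference forms, is what absorbs the bad term.
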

\begin{proof}
For any line bundle $L$, $\mathbb{P}(E) =
\mathbb{P}(E\otimes L)$.  Hence by replacing $E$ by $E\otimes A^{-1}$ for some
sufficiently ample line bundle $A$, we can assume that the dual bundle $E^*$ 
is generated by global sections. 

Fix an arbitrary $p \in B$. Choose $r$ sections $s_1,\ldots,
s_r$ of the dual
bundle $E^*$, which are linearly independent at $p$. Write
\[ f = s_1\wedge s_2\wedge\ldots \wedge s_r \]
for the corresponding section of the line bundle $\bigwedge^r E^*$. Let
$U\subset B$ be the set where $f$ does not vanish. On this set
$s_1,\ldots,s_r$ give a biholomorphism $\Phi: \pi^{-1}(U) \to U\times
\mathbb{P}^{r-1}$ such that the diagram
\begin{equation}
\begin{diagram}\label{diag1}
\node{\pi^{-1}(U) } \arrow{se,b,}{\pi}  \arrow[2]{e,t,b,}{\Phi}
\node[2]{U\times \mathbb{P}^{r-1}} \arrow{sw,r}{\textrm{pr}_1} \\
\node[2]{U}
\end{diagram}
\end{equation}
commutes, where $\textrm{pr}_1$ is the projection map onto the first factor. 

Let $\omega_{\textrm{prod}} =   (\textrm{pr}_1)^*\omega_B +
(\textrm{pr}_2)^* \omega_{\textrm{FS}}$ be the product metric on $U
\times \mathbb{P}^{r-1}$, where $\omega_{\textrm{FS}}$ is the Fubini-Study
metric on $\mathbb{P}^{r-1}$.  Define $\omegas =
\Phi^*\omega_{\textrm{prod}}$.  Then $\omegas$ is a smooth K\"ahler
metric on $\pi^{-1}(U)$, which we can also think of as a singular metric
on $X$. 

Fix a Hermitian metric $h$ on $E$, and write $h$ for the induced metric
on $E^*$, $\bigwedge^r E^*$ as well. 
We claim that there is a constant $C$ such that
\begin{equation}\label{trace}
	\tr{\omegas}{\omega_0} \leq C|f|^{-2}_h. 
\end{equation} 
This can be seen as follows.
The metric $\omega_0$ is uniformly equivalent to the metric $\oloc$, say, we obtain by
locally, over a small ball $V\subset B$, 
taking (non-holomorphic) sections $\sigma_1,\ldots,\sigma_r$ of $E^*$
which are pointwise orthonormal with respect to $h$, and then using
these sections to pull back the product metric on
$V\times\mathbb{P}^{r-1}$. Thus to compare the metrics $\omegas$
and $\oloc$ along each fiber of $\mathbb{P}(E)$, we need to compare
the Fubini-Study metrics on a given projective space constructed using
two different Hermitian metrics. On a given fiber $\mathbb{P}(E_q)$ the
metric $\oloc$ is constructed using the Hermitian metric $h$, whereas
$\omegas$ is constructed using the metric on $E_q$ in which
$s_1,\ldots,s_n$ give an orthonormal basis of $E_q^*$. At each point $q$
let
us write $A$ for the linear map $E_q\to\mathbb{C}^r$
given by $s_1,\ldots,s_r$, and $\lambda_1\leq \lambda_2\leq\ldots\leq
\lambda_r$ for the eigenvalues of  $A^*A$, where
the adjoint is formed by using the
metric $h$ on $E_q$ and the standard metric on $\mathbb{C}^r$. 
Lemma~\ref{lem:fubinistudy} below then implies that
$\omega_{\mathrm{sing}}\geq 
C\lambda_1\lambda_2\lambda_r^{-2}\omega_0$. 
Each eigenvalue is bounded above uniformly since
$s_1,\ldots,s_r$ are defined over the whole of $B$, so
$\lambda_1\lambda_2$ is bounded below by the determinant. 
The determinant of $A^*A$ is given
by $|f|_h^2$, so it follows that $\omegas\geq C|f|_h^2 \omega_0$.

To prove the lemma we will apply the maximum principle to the quantity
\begin{equation}
H = \log \left( |f|^3_h
\tr{\omegas}{\omega(t)} \right),
\end{equation}
on the set $\pi^{-1}(U)$.  
By the claim above, $H$ tends to negative infinity along
$X\setminus\pi^{-1}(U)$
and hence a maximum must occur in $\pi^{-1}(U)$ at each fixed time. 
 
We recall the well-known evolution inequality \cite{C} (see also \cite{Y, A}) for $\omega =\omega(t)$ solving (\ref{krf}):
\begin{equation} \label{cao}
\left( \ddt{} - \Delta \right) \log \tr{\hat{\omega}}{\omega} \le - \frac{1}{\tr{\hat{\omega}}{\omega}} g^{k \ov{\ell}} \hat{R}_{k \ov{\ell}}^{\ \ \, \ov{j} i} g_{i \ov{j}}, 
\end{equation} 
 where $\hat{\omega}$ is any fixed K\"ahler metric on $X$, and $\hat{R}_{k \ov{\ell}}^{\ \ \, \ov{j} i}$ are its curvature components. We will apply this with $\hat{\omega} = \omegas$.
 
 First note that  the curvature tensor of $\op$ has a lower bound
\begin{align} \nonumber
R_{i \ov{j} k \ov{\ell}} (\op) & \geq (\textrm{pr}_2^*
g_{\textrm{FS}})_{i \ov{j}} (\textrm{pr}_2^* g_{\textrm{FS}})_{k
\ov{\ell}} + (\textrm{pr}_2^*
g_{\textrm{FS}})_{i \ov{\ell}} (\textrm{pr}_2^* g_{\textrm{FS}})_{k
\ov{j}} \\ & \mbox{} - c (\textrm{pr}_1^* g_{B})_{i \ov{j}} (\textrm{pr}_1^*
g_{B})_{k \ov{\ell}}   
\end{align}
for a constant $c$ depending only on  the curvature of $g_B$, where the inequality of  tensors is meant in the sense of Griffiths. 
It follows that
\begin{equation} \label{r}
g^{i \ov{j}} g_{k \ov{\ell}} (R_{i \ov{j}}^{ \ \ \, \ov{\ell} k}(\omegas))
\ge - C(\tr{\omegas}{\omega}) (\tr{\omega}{\pi^*\omega_B}),
\end{equation}
for a uniform constant $C$.
Then compute, using (\ref{cao}), (\ref{r}) and the fact that the curvature
of the metric $h$ on $\bigwedge^r E^*$ is bounded by some multiple of
$\pi^* \omega_B$, 
\begin{equation}
\left( \ddt{} - \Delta \right) H \le C \tr{\omega}{\pi^*\omega_B} \le C'.
\end{equation}
where we used Lemma \ref{schwarz} for the second inequality.

It follows from the maximum principle that $H$ is bounded from above.   This gives the bound
\begin{equation} \label{z}
	\tr{\omegas}{\omega} \le C|f|_h^{-3}.
\end{equation} 
It follows that for some open subset $U'\subset\subset U$ containing
$p$, we have a bound $\tr{\omega_0}{\omega} \le C$ on $\pi^{-1}(U')$, 
where $C$ depends on
$p$. We can repeat the argument at each point of $B$, and by compactness
of $B$, a finite number of the open sets $U'$ cover $B$. Taking the
largest of the corresponding constants $C$ we obtain the required
result.
\end{proof}

We have used the following lemma in the proof:
\begin{lemma}\label{lem:fubinistudy}
	Let $A:\mathbb{C}^r\to\mathbb{C}^r$ be an invertible
	linear map, and write
	$\psi:\mathbb{P}^{r-1}\to\mathbb{P}^{r-1}$ for the induced map
	of projective spaces. Let $\omega_{\emph{FS}}$ be the standard
	Fubini-Study metric on $\mathbb{P}^{r-1}$. Then 
	\[ \psi^*\omega_{\emph{FS}} \geq \frac{\lambda_1\lambda_2}{\lambda_r^2}
	\omega_{\emph{FS}},\]
	where $0 < \lambda_1\leq \lambda_2\leq\ldots \leq
	\lambda_r$ are the eigenvalues of the matrix $A^*A$. 
\end{lemma}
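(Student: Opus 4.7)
The plan is to exploit the unitary invariance of the Fubini--Study metric to reduce to the case of a diagonal map, and then use a Lagrange-identity computation.

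First, using the singular value decomposition I would write $A = U D V$, where $U,V \in U(r)$ and $D = \textnormal{diag}(\mu_1,\ldots,\mu_r)$ with $\mu_i = \sqrt{\lambda_i}$. Since $U$ and $V$ induce biholomorphic isometries of $(\mathbb{P}^{r-1},\omega_{\textrm{FS}})$, both pullbacks $\psi_U^*\omega_{\textrm{FS}} = \omega_{\textrm{FS}}$ and $\psi_V^*\omega_{\textrm{FS}}=\omega_{\textrm{FS}}$ hold, so it suffices to prove the inequality for $A=D$.

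Next, in homogeneous coordinates $z=(z_1,\ldots,z_r)$, I would use the standard formula
\[ \omega_{\textrm{FS}, [z]}(X,\overline{Y}) = \frac{\langle X,Y\rangle\, |z|^2 - \langle X,z\rangle\langle z,Y\rangle}{|z|^4}, \]
for tangent vectors $X,Y \in T_z \mathbb{C}^r$. Taking $Y=X$ with $X\perp z$ (so that $\langle X,z\rangle=0$) and applying $\psi=\psi_D$, I get
\[ (\psi^*\omega_{\textrm{FS}})_{[z]}(X,\overline{X}) = \frac{|DX|^2\,|Dz|^2 - |\langle DX, Dz\rangle|^2}{|Dz|^4}. \]
Now the Lagrange identity $|u|^2|v|^2 - |\langle u,v\rangle|^2 = \sum_{i<j}|u_i\bar v_j - u_j\bar v_i|^2$, applied to $u_i=\mu_i X_i$, $v_i=\mu_i z_i$, gives
\[ |DX|^2|Dz|^2 - |\langle DX, Dz\rangle|^2 = \sum_{i<j}\lambda_i\lambda_j\,|X_i\bar z_j - X_j \bar z_i|^2, \]
while the same identity with $D = I$, together with $X\perp z$, gives
\[ |X|^2|z|^2 = \sum_{i<j}|X_i\bar z_j - X_j\bar z_i|^2. \]

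It then remains to compare the two weighted sums. Since $i<j$ forces $\lambda_i\geq\lambda_1$ and $\lambda_j\geq\lambda_2$, we have $\lambda_i\lambda_j\geq\lambda_1\lambda_2$ for every pair, and $|Dz|^2 = \sum_k \lambda_k|z_k|^2 \leq \lambda_r |z|^2$. Therefore
\[ \frac{\lambda_i\lambda_j}{|Dz|^4} \geq \frac{\lambda_1\lambda_2}{\lambda_r^2\,|z|^4}, \]
for each pair $(i,j)$, and summing yields
\[ (\psi^*\omega_{\textrm{FS}})_{[z]}(X,\overline{X}) \geq \frac{\lambda_1\lambda_2}{\lambda_r^2}\,\omega_{\textrm{FS},[z]}(X,\overline{X}), \]
which is the desired tensor inequality. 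The whole argument is essentially linear algebra once the unitary reduction is made; the only mild subtlety is setting up the computation so that the Lagrange identity applies cleanly, which is why I work with $Y=X\perp z$ rather than in affine coordinates.
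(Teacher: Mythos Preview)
Your proof is correct, and it takes a genuinely different route from the paper's. The paper fixes a point and a unit tangent vector and uses unitary transformations to move \emph{them} to $[1:0:\cdots:0]$ and $\partial/\partial z^1$, leaving $A$ general; it then computes $|\xi|^2_{\psi^*\omega_{\textrm{FS}}}$ in affine coordinates as the ratio $(a_{11}a_{22}-a_{12}a_{21})/a_{11}^2$, where $(a_{ij})=A^*A$, and invokes the fact (essentially Cauchy interlacing) that any $2\times 2$ principal minor of a positive Hermitian matrix is at least $\lambda_1\lambda_2$, while each diagonal entry is at most $\lambda_r$. You instead use the singular value decomposition to diagonalize \emph{$A$} and then apply the Lagrange identity in homogeneous coordinates, which makes the contribution of every pair $(i,j)$ explicit and reduces the eigenvalue bound to the trivial observation $\lambda_i\lambda_j\geq\lambda_1\lambda_2$. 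Your argument is a bit more self-contained, since it avoids the interlacing step; the paper's is a bit shorter once one grants that step. Both yield the same constant, and neither is sharper than the other.
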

\begin{proof}
	Let $\xi$ be a tangent vector of type $T^{1,0}$ at $x \in \mathbb{P}^{r-1}$  with $|\xi|^2_{\ofs}=1$.  Replacing $A$ by $UA$ for $U$ a unitary transformation  we may assume that $x$ is the point  $[1:0:\ldots:0]$. Choose  holomorphic coordinates  $z^i = Z_{i+1}/Z_1$ for $i=1, \ldots, r-1$ where 
 $Z_1, \ldots, Z_r$ are the homogeneous coordinates on $\mathbb{P}^{r-1}$.    Applying another unitary transformation (fixing $[1:0: \ldots :0]$), we may assume without loss of generality that $\xi = \partial/\partial z^1$. 
	
	The metric $\psi^*\omega_{FS}$ is given by
	\[ \psi^*\omega_{FS} = \sqrt{-1}
	\partial\bar{\partial}\log \left( \sum_{i,j=1}^n a_{ij}Z_j \ov{Z_i} \right),\]
	where $a_{ij}$ are the entries of $A^*A$. At the point
	$[1:0:\ldots:0]$ we then have
	\[ \psi^*\omega_{FS} = \frac{\sum a_{ij}dZ_j \wedge 
	d\ov{Z_i}}{a_{11}|Z_1|^2} -
	 \frac{\sum a_{1j} a_{i1} dZ_j \wedge d\ov{Z_i}}{a_{11}^2 |Z_1|^2},\]
	so
	\[  | \xi|^2_{\psi^*\ofs}  = \frac{a_{11}a_{22} -
	a_{12}a_{21}}{a_{11}^2} \geq
	\frac{\lambda_1\lambda_2}{\lambda_r^2},\]
	since the $2\times 2$ minor $a_{11}a_{22}-a_{12}a_{21}$ is
	bounded below by the product of the two smallest eigenvalues.
	Since $\xi$ was an arbitrary unit vector of type $T^{1,0}$, the result follows. 
\end{proof}

Note that Lemma \ref{lem:bdd} shows in particular that the diameter
of each fiber is uniformly bounded with respect to the evolving metric
$\omega$.  In fact, we can show that the diameters tend to zero as $t
\rightarrow T$.  For $b \in B$, write $F_b$ for the fiber $\pi^{-1}(b)$.

\begin{lemma}\label{lem:diam}
There exists a constant $C$ such that for all $b \in B$ and $t$ in $[0,T)$,
$$\emph{diam}_{g(t)} F_b \le C (T-t)^{1/3},$$
where we are writing $g(t)$ for the evolving metric $g(t)$ restricted to $X_s$.
\end{lemma}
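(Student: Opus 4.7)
The plan is to exploit the rational connectedness of the fiber: any two points $x, y \in F_b \cong \mathbb{P}^{r-1}$ lie on a projective line $\ell \cong \mathbb{P}^1 \subset F_b$, and it suffices to bound their distance along $\ell$. On this line the restricted form $\omega(t)|_\ell$ is a K\"ahler metric on $\mathbb{P}^1$ with two useful properties. Its total area equals $\frac{T-t}{T}\int_\ell \omega_0$, which is at most $C(T-t)$ with constant uniform in $b$ and in $\ell$, since every line in every fiber represents a fixed homology class on $X$ and $\int_\ell \omega_0$ depends only on that class. Moreover, Lemma~\ref{lem:bdd} restricted to $\ell$ gives the pointwise bound $\omega(t)|_\ell \leq C\omega_0|_\ell$.

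To bound $d_{\omega(t)}(x,y)$, pass to a local trivialization $\pi^{-1}(U)\cong U\times\mathbb{P}^{r-1}$ near $b$, and work in an affine chart of $\ell$ containing both $x$ and $y$; such a chart may always be chosen on $\mathbb{P}^1$ so that the Euclidean separation $L_0$ of $x$ and $y$ is uniformly bounded. Join $x$ to $y$ by the Euclidean segment $\gamma_0$ and, for a parameter $\delta>0$ to be optimized, consider the planar tube $T_\delta$ of Euclidean radius $\delta$ about $\gamma_0$, foliated by parallel segments $\gamma_s$ for $s\in(-\delta,\delta)$. Writing $\rho$ for the density of $\omega(t)|_\ell$ with respect to Lebesgue measure on the chart, Fubini yields
\[
\int_{-\delta}^{\delta}\int_{\gamma_s}\rho\,d\ell_{\textrm{euc}}\,ds \;=\; \int_{T_\delta}\omega(t)|_\ell \;\leq\; C(T-t),
\]
so the mean value theorem supplies an $s_0\in(-\delta,\delta)$ with $\int_{\gamma_{s_0}}\rho\,d\ell_{\textrm{euc}}\leq C(T-t)/\delta$.

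Cauchy--Schwarz then bounds the $\omega(t)$-length of $\gamma_{s_0}$ by
\[
L_{\omega(t)}(\gamma_{s_0})^2 \;\leq\; L_{\textrm{euc}}(\gamma_{s_0})\cdot \int_{\gamma_{s_0}}\rho\,d\ell_{\textrm{euc}} \;\leq\; C\,\frac{T-t}{\delta},
\]
while the endpoints of $\gamma_{s_0}$ can be connected to $x$ and $y$ by Euclidean segments of length at most $\delta$ whose $\omega(t)$-length is at most $C\delta$, using $\omega(t)\leq C\omega_0$ and the Euclidean/$\omega_0$ comparison. The triangle inequality then gives
\[
d_{\omega(t)}(x,y) \;\leq\; C\sqrt{\frac{T-t}{\delta}} + C\delta,
\]
and optimizing by taking $\delta\asymp (T-t)^{1/3}$ produces $d_{\omega(t)}(x,y)\leq C(T-t)^{1/3}$. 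Taking the supremum over $x,y\in F_b$ completes the lemma.

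The main thing to watch is uniformity of the constants as $b$ and $\ell$ vary. Uniformity of $\int_\ell \omega_0$ follows because all projective lines in all fibers are homologous in $X$; uniformity of the pointwise comparison between $\omega_0|_\ell$ and a fixed Euclidean chart, and the fact that $x,y$ may be placed in a chart with bounded Euclidean separation, are handled by covering $B$ with finitely many trivializing opens and using compactness together with the smoothness of $\omega_0$. The $1/3$ exponent is precisely what drops out of the balance between the Cauchy--Schwarz term $\sqrt{(T-t)/\delta}$ and the end-correction term $\delta$.
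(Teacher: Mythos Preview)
Your proof is correct and follows essentially the same argument as the paper: reduce to a projective line in the fiber, combine the area decay $\int_\ell\omega(t)\le C(T-t)$ with the pointwise bound from Lemma~\ref{lem:bdd}, average over a thin strip of width $\delta$ to find a good parallel segment via Fubini/mean value, bound its length by Cauchy--Schwarz, and connect the endpoints using the pointwise bound. The only cosmetic difference is that the paper fixes $\varepsilon=(T-t)^{1/3}$ at the outset whereas you leave $\delta$ free and optimize at the end; your explicit discussion of uniformity in $b$ and $\ell$ is a welcome addition.
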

\begin{proof}  The argument is similar to that of Lemma 3.2 in \cite{SW2}, but we include the proof for the sake of completeness. Let $p,q$ be two points in the fiber $F_b$. Under an
	identification $F_b\cong \mathbb{P}^{r-1}$ we can choose a line
	$\mathbb{P}^1\subset F_b$ containing $p,q$.
First note that from (\ref{reference}) we have
\begin{equation} \label{decay}
	\int_{\mathbb{P}^1} \omega(t) = 
	\frac{1}{T} \int_{\mathbb{P}^1} (T-t) \omega_0 \le C (T-t).
\end{equation}
Fix a background K\"ahler metric $\hat{g}$ on $\mathbb{P}^1$ and write
$g$ for the metric $g(t)$ restricted to $\mathbb{P}^1$.  Then
(\ref{decay}) gives us
\begin{equation} \label{fiber1}
\int_{\mathbb{P}^1} (\tr{\hat{g}}{g}) \, \hat{\omega} \le C(T-t),
\end{equation}
while from Lemma \ref{lem:bdd} we have
\begin{equation} \label{metricbound}
\tr{\hat{g}} g \le C.
\end{equation}
We wish to show that
$$d_{g} (p,q) \le C(T-t)^{1/3}.$$
Let $\ve = (T-t)^{1/3}$.   
Without loss of generality, we may assume that $p$ and $q$ lie in a fixed coordinate chart $U$ with holomorphic coordinate $z=x+iy$.  Moreover, we may assume that $p$ corresponds to the origin in $\mathbb{C}$,  and $q$ to the point $(x_0,0)$ with $x_0>0$.  We also assume  that the rectangle
$\mathcal{R} = [0, x_0] \times (- \ve, \ve) \subset \mathbb{R}^2 = \mathbb{C}$
is contained in  $U$.  Since   the fixed metric $\hat{g}$ is uniformly equivalent to the Euclidean metric in $\mathcal{R}$, we have  from (\ref{fiber1}),
$$\int_{-\ve}^{\ve} \left( \int_{0}^{x_0} (\tr{\hat{g}}{g}) dx \right) dy  = \int_{\mathcal{R}} (\tr{\hat{g}}{g}) dxdy \le  C(T-t).$$
It follows that for some $y' \in (-\ve, \ve)$,
$$\int_0^{x_0} (\tr{\hat{g}}g)(x, y') dx \le \frac{C}{\ve} (T-t) = C (T-t)^{2/3}.$$
If $p'$ and $q'$ are the points represented by $(0,y')$ and $(x_0, y')$ then 
\begin{eqnarray*}
d_g(p',q') & \le & \int_0^{x_0} \left(\sqrt{ g( \partial_x, \partial_x)} \right)(x,y') dx \\
& = & \int_0^{x_0} \left(\sqrt{ \tr{\hat{g}}{g}} \sqrt{ \hat{g}( \partial_x, \partial_x)} \right)(x,y') dx\\
& \le & \left( \int_0^{x_0} (\tr{\hat{g}}{g})(x,y') dx\right)^{1/2} \left( \int_0^{x_0} \left( \hat{g} ( \partial_x, \partial_x) \right)(x,y')dx \right)^{1/2} \\
& \le & C(T-t)^{1/3}.
\end{eqnarray*}
But from (\ref{metricbound}), we have
$$d_g(p,p') \le C d_{\hat{g}}(p,p') \le C' \ve = C'(T-t)^{1/3},$$
and similarly for $d_g(q,q')$.
Then
$$d_g(p,q) \le d_g(p,p') + d_g(p',q')  + d_g(q',q) \le C (T-t)^{1/3},$$
as required.
\end{proof}

It is natural to expect that in fact $\mathrm{diam}_{g(t)}F_b$
decays at the faster rate of
$(T-t)^{1/2}$, but we will not need this for our
application.

\section{Gromov-Hausdorff convergence}\label{sec:mainproof}

Using the estimates from Section \ref{sec:estimates} we now prove Theorem \ref{thm:main}.

\begin{lemma}\label{lem:dist}
	Write $d_t : X\times X\to \mathbb{R}$ for the distance function
	induced by the metric $\omega(t)$. There exists a sequence of
	times $t_i\to T$, such that the functions $d_{t_i}$ converge
	uniformly to a  function $d_\infty: X\times X\to\mathbb{R}$. 
	
	Moreover if 
	for $p,q\in B$
	we let $d_{B,\infty}(p,q)=d_\infty(\tilde{p},\tilde{q})$, where
	$\tilde{p}\in F_p$ and $\tilde{q}\in F_q$, then $d_{B,\infty}$ 
	defines a
	distance function on $B$, which is uniformly equivalent to that
	induced by $\omega_B$. 
\end{lemma}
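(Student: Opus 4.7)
My plan is to produce $d_\infty$ via Arzel\`a--Ascoli, use the fiber-collapse estimate to descend it to $B$, and finally compare the descended function with $d_{\omega_B}$ from both sides.

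By Lemma~\ref{lem:bdd}, $\omega(t)\le C\omega_0$ uniformly in $t$, so $\mathrm{length}_{g(t)}(\gamma)\le \sqrt{C}\,\mathrm{length}_{g_0}(\gamma)$ for every smooth curve $\gamma$, and hence $d_t(x,y)\le \sqrt{C}\,d_{\omega_0}(x,y)$ for all $t\in[0,T)$ and $x,y\in X$. Together with compactness of $X$ this gives a uniform bound on $d_t$ and, via the reverse triangle inequality, uniform Lipschitz continuity of $d_t$ with respect to the product metric on $X\times X$ induced by $\omega_0$. Arzel\`a--Ascoli then provides a subsequence $t_i\to T$ along which $d_{t_i}$ converges uniformly to a continuous function $d_\infty:X\times X\to\mathbb{R}$, which automatically inherits symmetry, non-negativity, and the triangle inequality.

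Next I invoke Lemma~\ref{lem:diam}: $\operatorname{diam}_{g(t)}F_b\le C(T-t)^{1/3}\to 0$, so $d_\infty$ vanishes on any pair of points in a common fiber. The triangle inequality for $d_\infty$ then forces it to be constant on $F_p\times F_q$ for every $p,q\in B$, so $d_{B,\infty}(p,q):=d_\infty(\tilde p,\tilde q)$ is well defined and inherits symmetry and subadditivity.

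For the uniform equivalence with $d_{\omega_B}$, the lower bound comes directly from the Schwarz lemma: by Lemma~\ref{schwarz}, $\omega(t)\ge c\pi^*\omega_B$, so $\mathrm{length}_{g(t)}(\gamma)\ge \sqrt{c}\,\mathrm{length}_{g_B}(\pi\circ\gamma)$ along any curve $\gamma$ in $X$, giving $d_t(\tilde p,\tilde q)\ge \sqrt{c}\,d_{\omega_B}(p,q)$ and hence $d_{B,\infty}\ge\sqrt{c}\,d_{\omega_B}$; in particular $d_{B,\infty}$ is non-degenerate. For the upper bound I would fix once and for all a finite open cover $\{V_\alpha\}_{\alpha=1}^{N}$ of $B$ by sets contained in trivializing charts $U_\alpha$ for $\pi$, together with smooth sections $s_\alpha:U_\alpha\to X$. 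Given $p,q\in B$, I partition a minimizing $\omega_B$-geodesic from $p$ to $q$ into at most $N$ pieces each lying in some $V_\alpha$, and lift each piece by the corresponding $s_\alpha$. The resulting piecewise-smooth path in $X$ has smooth arcs of $g_0$-length bounded by a uniform constant times $d_{\omega_B}(p,q)$, while its at most $N-1$ jumps lie in single fibers. Combining $\omega(t)\le C\omega_0$ on the smooth arcs with Lemma~\ref{lem:diam} on the jumps (and on the final adjustment to any chosen $\tilde p\in F_p,\tilde q\in F_q$) yields
\[
d_t(\tilde p,\tilde q)\le C'\,d_{\omega_B}(p,q)+C''(T-t)^{1/3},
\]
and letting $t=t_i\to T$ gives $d_{B,\infty}(p,q)\le C'\,d_{\omega_B}(p,q)$.

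The main obstacle is the upper bound: absent a global section of $\pi$ one must piece together local sections, and Lemma~\ref{lem:diam} is essential here precisely to absorb the fiber-valued jumps between consecutive sections in the limit $t\to T$. Every other ingredient is a packaged application of Arzel\`a--Ascoli plus the three estimates already in hand.
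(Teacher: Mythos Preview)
Your argument is correct and its architecture matches the paper's: Arzel\`a--Ascoli from $\omega(t)\le C\omega_0$, the descent to $B$ via Lemma~\ref{lem:diam}, and the lower bound via Lemma~\ref{schwarz} are exactly as in the paper. The only real difference is in the upper bound. The paper avoids your local-section construction by simply invoking the elementary fact (from compactness and smoothness of the fibration) that there is a constant $C_1$ with $d_0(F_p,F_q)\le C_1\,d_{\omega_B}(p,q)$, and then writing, for any $x,y\in X$,
\[
d_t(x,y)\;\le\;\mathrm{diam}_{g(t)}F_{\pi(x)}+\mathrm{diam}_{g(t)}F_{\pi(y)}+\sqrt{C}\,d_0\bigl(F_{\pi(x)},F_{\pi(y)}\bigr)\;\le\;2C(T-t)^{1/3}+C_1\sqrt{C}\,d_{\omega_B}(\pi(x),\pi(y)).
\]
This single triangle-inequality step replaces your piecewise lift; your approach is correct but works harder than necessary. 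One small slip: a minimizing $\omega_B$-geodesic need not break into at most $N$ pieces (one per chart)---it could re-enter charts---but it does break into a uniformly bounded number of pieces governed by the Lebesgue number of the cover and $\mathrm{diam}_{\omega_B}(B)$, so your estimate survives with a possibly larger constant $C''$.
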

\begin{proof}
	First note that the functions $d_t:X\times X\to\mathbb{R}$ are
	uniformly bounded. Indeed by Lemma \ref{lem:bdd} we have a
	constant $C$ such that
	$d_t(x,y) < \sqrt{C}d_0(x,y) < C'$ for any $t < T$ and $x,y\in X$. 
	In addition the functions $d_t:X\times X\to \mathbb{R}$
	are equicontinuous with respect to the metric on $X\times X$
	induced by $d_0$, since for $x,x',y,y'\in X$ we have
	\[ \begin{aligned}
		|d_t(x,y) - d_t(x',y')| &\leq |d_t(x,y) - d_t(x,y')| +
		|d_t(x,y') - d_t(x',y')| \\
		&\leq d_t(y,y') + d_t(x,x') \\
		&\leq \sqrt{C}(d_0(y,y') + d_0(x,x')).
	\end{aligned}\]
	By the Arzela-Ascoli theorem there is a sequence of times 
	$t_i\to T$ such that the functions $d_{t_i}$ converge uniformly
	to a continuous function $d_{\infty}:X\times X\to\mathbb{R}$. It
	follows that $d_\infty$ is non-negative, symmetric and 
	satisfies the triangle inequality. 

	Let $d_B: B\times B\to\mathbb{R}$ be the distance function on
	$B$ induced by the metric $\omega_B$. 
	From Lemma~\ref{schwarz} we have a constant $c > 0$ such that
	$d_t(x,y) \geq \sqrt{c} \, d_B(\pi(x),\pi(y))$. It follows that the
	limit $d_\infty$ satisfies 
	\begin{equation}\label{eq:lower}
		d_\infty(x,y)\geq \sqrt{c} \, d_B(\pi(x),\pi(y)).
	\end{equation}
	At the same time there is a constant $C_1$ such that for any
	$p,q\in B$ we have $d_0(F_p,F_q) < C_1d_B(p,q)$, so using both
	Lemma~\ref{lem:bdd} and Lemma~\ref{lem:diam} we have
	\[\begin{aligned}
		d_t(x,y)&\leq \mathrm{diam}_{g(t)}F_{\pi(x)} +
		\mathrm{diam}_{g(t)}F_{\pi(y)} + \sqrt{C}d_0(F_{\pi(x)},
		F_{\pi(y)}) \\
		&\leq 2C(T-t)^{1/3} + C_1\sqrt{C}d_B(\pi(x),\pi(y)).
	\end{aligned}\]
	This implies that
	\begin{equation}\label{eq:upper}
		d_\infty(x,y) \leq C_2d_B(\pi(x),\pi(y)).
	\end{equation}
	For $p,q\in B$ we now define $d_{B,\infty}(p,q) =
	d_\infty(\tilde{p},\tilde{q})$, where $\tilde{p}\in F_p$ and
	$\tilde{q}\in F_q$. This is independent of the choice of lifts
	$\tilde{p},\tilde{q}$ since if say $\tilde{p}'$ is a different
	lift of $p$, then by \eqref{eq:upper} and the triangle
	inequality we have
	\[ d_\infty(\tilde{p}',\tilde{q}) \leq
	d_\infty(\tilde{p},\tilde{q}) + d_\infty(\tilde{p},\tilde{p}') =
	d_\infty(\tilde{p},\tilde{q}),\]
	and by switching $\tilde{p}, \tilde{p}'$
	we get the reverse inequality. Moreover it follows from
	\eqref{eq:lower} and $\eqref{eq:upper}$ that $d_{B,\infty}$ is
	uniformly equivalent to $d_B$. This concludes the proof.
\end{proof}

\begin{theorem}
	In the notation of Lemma~\ref{lem:dist} we have
	$(X,d_{t_i})\to (B,d_{B,\infty})$ in the Gromov-Hausdorff sense,
	where we recall that $d_{t_i}$ is the distance function induced by
	the metric $\omega(t_i)$. 
\end{theorem}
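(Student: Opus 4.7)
The plan is to invoke the standard criterion for Gromov--Hausdorff convergence via $\varepsilon$-approximation maps, taking the projection $\pi:X\to B$ itself as the approximating map for each time $t_i$. Recall that if $f:(X,d_X)\to (Y,d_Y)$ is a map between compact metric spaces whose image is $\varepsilon$-dense in $Y$ and which distorts distances by at most $\varepsilon$, then the Gromov--Hausdorff distance between the two spaces is at most $2\varepsilon$. I will show that $\pi$ is an $\varepsilon_i$-approximation from $(X,d_{t_i})$ to $(B,d_{B,\infty})$ with $\varepsilon_i\to 0$, which will immediately give the theorem.

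The density condition is immediate from the surjectivity of $\pi$. For the distortion estimate, the crucial observation is already encoded in the definition of $d_{B,\infty}$ given in Lemma~\ref{lem:dist}: since $d_{B,\infty}(\pi(x),\pi(y))=d_\infty(\tilde{p},\tilde{q})$ is independent of the choice of lifts $\tilde{p}\in F_{\pi(x)}$ and $\tilde{q}\in F_{\pi(y)}$, I may take $\tilde{p}=x$ and $\tilde{q}=y$ themselves, obtaining the tautological identity
\[ d_{B,\infty}(\pi(x),\pi(y)) = d_\infty(x,y) \]
for every $x,y\in X$. Consequently
\[ \bigl|d_{t_i}(x,y) - d_{B,\infty}(\pi(x),\pi(y))\bigr| = \bigl|d_{t_i}(x,y) - d_\infty(x,y)\bigr|, \]
and Lemma~\ref{lem:dist} supplies the uniform convergence $d_{t_i}\to d_\infty$ on $X\times X$ which drives the right-hand side to zero uniformly in $(x,y)$. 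Setting $\varepsilon_i := \sup_{x,y\in X}|d_{t_i}(x,y)-d_\infty(x,y)|$ then completes the verification.

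There is essentially no obstacle at this stage: once Lemmas~\ref{lem:bdd}, \ref{lem:diam}, and \ref{lem:dist} are granted, the theorem is a direct unpacking of the definition of Gromov--Hausdorff convergence together with the tautology $d_{B,\infty}\circ(\pi\times\pi)=d_\infty$. All the substantive content sits in the earlier analytic estimates---most visibly the fiber-diameter decay of Lemma~\ref{lem:diam}, which is exactly what forces the limiting pseudo-distance $d_\infty$ to be constant on fibers and thereby to descend to an honest distance function on $B$.
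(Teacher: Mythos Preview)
Your argument is correct and follows essentially the same route as the paper: both use $\pi$ as the approximating map and reduce the distortion estimate to the tautology $d_{B,\infty}(\pi(x),\pi(y))=d_\infty(x,y)$ together with the uniform convergence $d_{t_i}\to d_\infty$ from Lemma~\ref{lem:dist}. The only cosmetic difference is that the paper uses Fukaya's two-map criterion (with a section $G:B\to X$ in the reverse direction), whereas you use the equivalent single-map $\varepsilon$-approximation criterion and handle the reverse direction via the surjectivity of $\pi$; your packaging is slightly leaner but the content is identical.
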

\begin{proof}
	We use the characterization of Gromov-Hausdorff convergence
	given in, for example, \cite{F}. Given metric spaces
	$(X,d_X)$ and $(Y,d_Y)$, the Gromov-Hausdorff distance
	$d_{\textrm{GH}}(X,Y)$ is the infimum of all $\ve > 0$ such that the
	following holds. There exist maps $F:X\to Y$ and $G:Y\to X$ such
	that
	\[ \begin{aligned}
		|d_X(x_1,x_2) - d_Y(F(x_1),F(x_2))| & <
		\ve,\quad\text{ for all }x_1,x_2\in X,\\
		d_X(x, G\circ F(x)) & < \ve,\quad\text{ for all }x\in
		X,
	\end{aligned}\]
	and the two symmetric properties for $Y$ also hold. 

	Now define $F:X\to B$ to be the projection $F = \pi$, and $G:
	B\to X$ to be any (possibly discontinuous) map such that $F\circ
	G$ is the identity on $B$. Then for any $x_1,x_2,x\in X$ we have
	\[ \begin{aligned}
		|d_{t_i}(x_1,x_2) - d_{B,\infty}(F(x_1),F(x_2))| &=
		|d_{t_i}(x_1,x_2) - d_{\infty}(x_1,x_2)| \\
		d_{t_i}(x, G\circ F(x)) &\leq \mathrm{diam}_{g(t_i)}
		F_{\pi(x)},
	\end{aligned}\]
	and both of these quantities converge to zero uniformly in
	$x_1,x_2$ as $t_i\to T$, because $d_{t_i}\to d_\infty$ uniformly,
	and because of Lemma~\ref{lem:diam}.
	At the same time for any $p_1,p_2,p\in B$ we have
	\[ \begin{aligned}
		|d_{B,\infty}(p_1,p_2) - d_{t_i}(G(p_1),G(p_2))| &=
		|d_\infty(G(p_1),G(p_2)) - d_{t_i}(G(p_1),G(p_2))| \\
		d_{B_\infty}(p, F\circ G(p)) &= 0,
	\end{aligned}\]
	and again 
	the first quantity goes to zero as $t_i\to T$ since $d_{t_i}\to
	d_{\infty}$ uniformly. 

	This shows that as $i\to\infty$ we have $d_{\textrm{GH}}\left( (X,d_{t_i}),
	(B,d_{B,\infty})\right)\to 0$, which is what we wanted to prove.
\end{proof}

\begin{remark}
	Throughout the paper we have focused on manifolds of the form
	$\mathbb{P}(E)$, where $E$ is a vector bundle over a projective
	manifold $B$. It is likely that the
	behavior of the K\"ahler-Ricci flow 
	is similar for more general fibrations $X\to B$ over a
	K\"ahler manifold $B$, where the K\"ahler class of
	the metric at time $t$ tends to the pull-back of a K\"ahler
	class from $B$ as $t\to T$, just as in Equation
	\eqref{condition}. The main difficulty in extending Theorem
	\ref{thm:main} in this way is generalizing Lemma~\ref{lem:bdd}.
	Indeed it is crucial in the maximum principle argument that the
	fibers admit a metric with
	non-negative bisectional curvature. On the other it is likely
	that the arguments do extend without any additional difficulties
	for fibrations where the fibers admit such a metric.
\end{remark}

$^{*}$ Department of Mathematics \\
Rutgers University, Piscataway, NJ 08854\\

$^{**}$ Department of Mathematics \\ Columbia University, NY 10027 \\

$^{***}$ Department of Mathematics \\
University of California San Diego, La Jolla, CA 92093

\end{document}